\numberwithin{equation}{section}
\numberwithin{figure}{section}
\theoremstyle{plain}
\newtheorem{thm}{\protect\theoremname}
\theoremstyle{definition}
\newtheorem*{example*}{\protect\examplename}
\theoremstyle{plain}
\newtheorem{lem}[thm]{\protect\lemmaname}
\providecommand{\examplename}{Example}
\providecommand{\lemmaname}{Lemma}
\providecommand{\theoremname}{Theorem}
\begin{document}
\title[polynomials with a four-term contiguous relation]{Zeros of a table of polynomials satisfying a four-term contiguous
relation}
\author{Jack Luong \and Khang Tran}
\email{khangt@mail.fresnostate.edu \and jackaham\_luongcoln@mail.fresnostate.edu}
\address{Department of Mathematics, California State University, Fresno.\\
5245 North Backer Avenue M/S PB108 Fresno, CA 93740}
\subjclass[2000]{30C15; 26C10; 11C08}
\begin{abstract}
For any $A(z),B(z),C(z)\in\mathbb{C}[z]$, we study the zero distribution
of a table of polynomials $\left\{ P_{m,n}(z)\right\} _{m,n\in\mathbb{N}_{0}}$
satisfying the recurrence relation 
\[
P_{m,n}(z)=A(z)P_{m-1,n}(z)+B(z)P_{m,n-1}(z)+C(z)P_{m-1,n-1}(z)
\]
with the initial condition $P_{0,0}(z)=1$ and $P_{-m,-n}(z)=0$ for
all $m,n\in\mathbb{N}$. We show that the zeros of $P_{m,n}(z)$ lie
on a curve whose equation is given explicitly in terms of $A(z),B(z)$,
and $C(z)$. We also study the zero distribution of a case with a
general initial condition. 
\end{abstract}

\maketitle

\section{Introduction}

The study of the zero distribution of a sequence of polynomials $\left\{ S_{N}(z)\right\} _{N\in\mathbb{N}_{0}}$
satisfying a finite recurrence of order $m$ 
\[
\sum_{k=0}^{m}A_{k}(z)S_{N-k}(z)=0,\qquad A_{0}(z)=1,
\]
is of interest to many mathematicians. This sequence includes some
classical sequences of orthogonal polynomials such as the sequence
of Chebyshev polynomials. Orthogonality, in turn, provides information
regarding the locations of zeros of polynomials in the sequence (i.e.,
orthogonality implies reality of zeros). Another approach to find
these locations is to use asymptotic analysis and properties of exponential
polynomials to obtain an optimal curve containing zeros of all these
polynomials (see \cite{ft1,ft2}). In this setting, optimal curve
means that the union of all these zeros forms a dense subset of this
curve. However, even for the case $m=3$ (four-term recurrence), an
explicit equation in terms of the coefficient polynomials, $A_{k}(z)$,
for such an optimal curve is still unknown. Some special cases of
this four-term recurrence have been studied in \cite{adams,err,ghr,tz,tz-1}.

Another approach to the zero distribution of the sequence $\left\{ S_{N}(z)\right\} _{N\in\mathbb{N}_{0}}$
is to produce this sequence from a table of polynomials $\left\{ P_{m,n}(z)\right\} _{m,n\in\mathbb{N}_{0}}$.
This approach necessitates the study of zero distribution of a table
of polynomials satisfying a finite recurrence. Motivated by this approach
and the unsolved four-term recurrence mentioned above, for $A(z),B(z),C(z)\in\mathbb{C}[z]$,
we study the distribution of zeros of a table of polynomials $\left\{ P_{m,n}(z)\right\} _{m,n\in\mathbb{N}_{0}}$
satisfying the recurrence relation 
\[
P_{m,n}(z)+A(z)P_{m-1,n}(z)+B(z)P_{m,n-1}(z)+C(z)P_{m-1,n-1}(z)=0
\]
with the standard initial conditions $P_{0,0}(z)=1$ and $P_{-m,-n}(z)=0$
$\forall m,n\in\mathbb{N}$. Equivalently, this table is generated
by 
\begin{equation}
\sum_{m=0}^{\infty}\sum_{n=0}^{\infty}P_{m,n}(z)s^{m}t^{n}=\frac{1}{1+A(z)s+B(z)t+C(z)st}.\label{eq:genfunc}
\end{equation}
In the case $A(z)=z$, $B(z)=z$, and $C(z)=z$, the numbers $P_{m,n}(-1)$
are the Delannoy numbers which count the number of nearest-neighbor
paths from the origin to $(m,n)$ moving only north, east and northeast
(see \cite{stanley}). When $A(z)=-1$, $B(z)=-1$, and $C(z)=z$,
the numbers $P_{m,n}(0)$ are the binomial coefficients ${\displaystyle \binom{m+n}{m,n}=\frac{(m+n)!}{m!n!}}$.

In terms of its connection with the four-term recurrence sequence
$\left\{ S_{N}(z)\right\} _{N\in\mathbb{N}_{0}}$, letting $t=s^{2}$
in \eqref{eq:genfunc} produces the sequence $S_{N}(z):=\sum_{m+2n=N}P_{m,n}(z)$
which satisfies the general four-term recurrence 
\[
S_{N}(z)+A(z)S_{N-1}(z)+B(z)S_{N-2}(z)+C(z)S_{N-3}(z)=0
\]
with the initial condition $S_{0}(z)=1$ and $S_{-N}(z)=0$ for $N\in\mathbb{N}$.
On the other hand, if we let $t=s$ in \eqref{eq:genfunc}, then with
$D(z):=A(z)+B(z)$, we obtain the sequence $R_{N}(z):=\sum_{m+n=N}P_{m,n}(z)$
satisfying the general three-term recurrence 
\[
R_{N}(z)+D(z)R_{N-1}(z)+C(z)R_{N-2}(z)=0
\]
with the same initial condition. The zeros of these polynomials $R_{N}(z)$
which are not zeros of $C(z)$ lie on the curve defined by \cite{tran}
\[
\Im\frac{D^{2}(z)}{C(z)}=0\qquad\text{and}\qquad0\le\Re\frac{D^{2}(z)}{C(z)}\le4
\]
and are dense there as $N\rightarrow\infty$.

Before stating our theorem, we quickly mention that in the special
case $A(z)=1$, $B(z)=1$, and $C(z)=z$, the diagonal of our table
$\left\{ P_{m,n}(z)\right\} _{m,n\in\mathbb{N}_{0}}$ defined in \eqref{eq:genfunc}
relates to the famous sequence of Legendre polynomials $\left\{ L_{m}(z)\right\} _{m=0}^{\infty}$
(c.f. Lemma \ref{lem:Legendre}) by 
\[
P_{m,m}(z)=z^{m}L_{m}\left(\frac{2}{z}-1\right).
\]
With $\mathcal{Z}(P_{m,n})$ denoting the set of zeros of $P_{m,n}(z)$,
we state our theorem. 
\begin{thm}
\label{thm:mainthm}For any polynomials $A(z),B(z),C(z)\in\mathbb{C}[z]$,
let $\left\{ P_{m,n}(z)\right\} _{m,n=0}^{\infty}$ be the table of
polynomials generated by \eqref{eq:genfunc}. Then for any $m,n\in\mathbb{N}_{0}$,
all the zeros of $P_{m,n}(z)$ which satisfy $A(z)B(z)\ne0$ lie on
the curve $\mathcal{C}$ defined by 
\[
\Im\left(\frac{C(z)}{A(z)B(z)}\right)=0\qquad\text{and}\qquad\Re\left(\frac{C(z)}{A(z)B(z)}\right)\ge1,
\]
and ${\displaystyle \bigcup_{m,n=0}^{\infty}\mathcal{Z}\left(P_{m,n}\right)}$
is dense on $\mathcal{C}$. 
\end{thm}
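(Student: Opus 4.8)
The plan is to collapse the two-parameter problem to the zeros of a single univariate polynomial and then recognize that polynomial as a Jacobi polynomial, for which the location and the limiting distribution of zeros are classical.

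First I would extract a closed form for $P_{m,n}(z)$ from \eqref{eq:genfunc}. Expanding the geometric series in $t$ gives $\sum_{m\ge 0}P_{m,n}(z)s^{m}=(-1)^{n}(B(z)+C(z)s)^{n}/(1+A(z)s)^{n+1}$, and after the substitution $s\mapsto-\sigma/A(z)$ (legitimate since we only study zeros with $A(z)B(z)\ne0$) the coefficient extraction $[s^{m}]$ becomes
\[
P_{m,n}(z)=(-1)^{m+n}A(z)^{m}B(z)^{n}\,[\sigma^{m}]\frac{(1-w\sigma)^{n}}{(1-\sigma)^{n+1}},\qquad w:=\frac{C(z)}{A(z)B(z)}.
\]
Writing $1-w\sigma=(1-\sigma)-(w-1)\sigma$, expanding by the binomial theorem, and using $[\sigma^{m}]\sigma^{k}(1-\sigma)^{-k-1}=\binom{m}{k}$ collapses the right-hand side to $(-1)^{m+n}A(z)^{m}B(z)^{n}Q_{m,n}(w)$, where
\[
Q_{m,n}(w):=\sum_{k=0}^{\min(m,n)}(-1)^{k}\binom{m}{k}\binom{n}{k}(w-1)^{k}.
\]
Hence the zeros of $P_{m,n}$ with $A(z)B(z)\ne0$ are exactly the $z$ at which $C(z)/(A(z)B(z))$ is a zero of $Q_{m,n}$, and everything reduces to locating the zeros of this polynomial.

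The key step is the identification of $Q_{m,n}$. Since the formula is symmetric in $m$ and $n$, assume $m\le n$; then $Q_{m,n}(w)={}_{2}F_{1}(-m,-n;1;1-w)$, and Pfaff's transformation — applied so as to produce the parameter pair $(0,n-m)$ rather than a degenerate one — gives
\[
Q_{m,n}(w)=w^{m}\,P_{m}^{(0,\,n-m)}\!\left(\frac{2}{w}-1\right),
\]
where $P_{m}^{(0,\,n-m)}$ is the Jacobi polynomial (the special case $A=B=1$, $C=z$ recovers the Legendre identity quoted before the theorem, which is a reassuring check). Because $0>-1$ and $n-m\ge0>-1$, the classical theory of Jacobi polynomials tells us $P_{m}^{(0,\,n-m)}$ has $m$ simple zeros, all in $(-1,1)$. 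Since $Q_{m,n}(0)=\binom{m+n}{m}\ne0$, the zeros of $Q_{m,n}$ are precisely the numbers $w=2/(\xi+1)$ with $\xi$ a zero of $P_{m}^{(0,\,n-m)}$; for $\xi\in(-1,1)$ this forces $w$ real and $>1$. Pulling this back through $z\mapsto C(z)/(A(z)B(z))$ shows every zero of $P_{m,n}$ with $A(z)B(z)\ne0$ satisfies $\Im(C/(AB))=0$ and $\Re(C/(AB))\ge1$, that is, lies on $\mathcal{C}$.

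For density I would specialize to the diagonal $m=n$, where $P_{m}^{(0,0)}$ is the Legendre polynomial and its zeros are classically dense in $[-1,1]$ as $m\to\infty$; since $\xi\mapsto2/(\xi+1)$ is a homeomorphism of $(-1,1]$ onto $[1,\infty)$, the zeros of $Q_{m,m}$ are dense in the ray $\{\,\Im w=0,\ \Re w\ge1\,\}$. It then remains to transport this density from the $w$-plane to $\mathcal{C}$ along the rational map $\phi(z):=C(z)/(A(z)B(z))$: on $\mathcal{C}$ one has $A(z)B(z)\ne0$, and away from the finitely many critical points of $\phi$ the open mapping theorem lets each zero of $Q_{m,m}$ near $\phi(z_{0})$ be pulled back to a zero of $P_{m,m}$ near $z_{0}$, while the (non-isolated) critical points of $\phi$ on $\mathcal{C}$ are absorbed by passing to closures. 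I expect the two genuinely delicate points to be the choice of the decomposition $1-w\sigma=(1-\sigma)-(w-1)\sigma$ that yields the clean $(w-1)^{k}$ expansion, and the selection of the Pfaff transformation that yields a nondegenerate Jacobi polynomial; once $Q_{m,n}$ is recognized, both assertions follow from standard facts. (If $C/(AB)$ is constant then $\mathcal{C}$ degenerates and the statement must be read accordingly; I would treat this case separately.)
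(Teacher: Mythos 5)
Your proposal is correct in substance, but it reaches the goal by a genuinely different route than the paper. You bypass the paper's normalization step only implicitly: your closed form $P_{m,n}(z)=(-1)^{m+n}A^{m}B^{n}\sum_{k}(-1)^{k}\binom{m}{k}\binom{n}{k}(w-1)^{k}$ with $w=C/(AB)$ is exactly the paper's reduction $P_{m,n}=A^{m}B^{n}H_{m,n}(w)$ made explicit, after which you identify $H_{m,n}$ (up to sign) as the terminating hypergeometric ${}_{2}F_{1}(-m,-n;1;1-w)$ and, via Pfaff, as $w^{m}P_{m}^{(0,n-m)}(2/w-1)$, so that reality and location of zeros follow from the classical fact that Jacobi zeros with $\alpha,\beta>-1$ lie in $(-1,1)$. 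The paper instead derives the generating identity $\sum_{m}H_{m,n}(z)s^{m}=(-1)^{n}(1+zs)^{n}/(1+s)^{n+1}$, proves $H_{m,n}(z)=\frac{z^{m}}{n!}Q_{m+n,n}^{(n)}(1/z)$ with $Q_{m+n,n}(z)=\pm(z-1)^{n}z^{m}$, and invokes Gauss--Lucas; that derivative representation is precisely the Rodrigues formula for $P_{m}^{(0,n-m)}$ in disguise, so the two arguments are two faces of the same identity. Your route is shorter if one is willing to quote Jacobi theory and yields extra information (simplicity of the zeros, and limiting distributions for each fixed $n-m$); the paper's route is self-contained, needing only Gauss--Lucas plus the Legendre connection, which it obtains by a residue computation on the diagonal and which coincides with your $\alpha=\beta=0$ case. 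On density you are actually more careful than the paper, which only asserts that it suffices to treat $H_{m,n}$ on $[1,\infty)$: your pullback via the open mapping theorem is the right mechanism, but note that the open mapping theorem applies at critical points of a nonconstant rational map as well (and such critical points are isolated and finitely many, so the ``non-isolated'' remark and the closure maneuver are unnecessary); the only genuine precaution is to shrink the neighborhood of a point of $\mathcal{C}$ so as to avoid the finitely many zeros of $AB$ before pulling a zero of $Q_{m,m}$ back. Finally, the degenerate case where $C/(AB)$ is constant, which you flag, is indeed outside the argument (yours and the paper's alike) and must be read separately, e.g.\ $A=B=1$, $C=2$ makes every $P_{m,n}$ constant.
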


We will prove this theorem in Section 2 and study a general initial
condition (c.f. Theorem \ref{thm:generaltheorem}) in Section 3. We
end the introduction with an example of Theorem \ref{thm:mainthm}. 
\begin{example*}
In the case $A(z)=1$, $B(z)=z^{2}-2z+2$, and $C(z)=z$, we let $z=x+iy$
and compute 
\begin{align*}
\Im\left(\frac{C(z)}{A(z)B(z)}\right) & =-\frac{y\left(x^{2}+y^{2}-2\right)}{\left((x-1)^{2}+(y-1)^{2}\right)\left((x-1)^{2}+(y+1)^{2}\right)},\\
\Re\left(\frac{C(z)}{A(z)B(z)}\right) & =\frac{x^{3}-2x^{2}+xy^{2}+2x-2y^{2}}{\left((x-1)^{2}+(y-1)^{2}\right)\left((x-1)^{2}+(y+1)^{2}\right)}.
\end{align*}
The equation $\Im\left(C(z)/(A(z)B(z))\right)=0$ implies that $\mathcal{C}$
is a portion of the line $y=0$ and a portion of the circle $x^{2}+y^{2}-2=0$.
In the case $y=0$, the inequality $\Re\left(C(z)/(A(z)B(z))\right)-1\ge0$
yields $(x-1)(2-x)\ge0$. Similarly in the case $x^{2}+y^{2}=2$,
this inequality gives 
\[
\Re\left(\frac{C(z)}{A(z)B(z)}\right)-1=\frac{3-2x}{2(x-1)}\ge0
\]
or equivalently $x\ge1$ since $3-2x>0$ (for $x^{2}+y^{2}=2$). We
conclude that $\mathcal{C}$ is the union of the interval $[1,2]$
and the portion of the circle $x^{2}+y^{2}=2$ with $x\ge1$ (see
Figure \ref{fig:example}).

\begin{figure}
\begin{centering}
\includegraphics[scale=0.5]{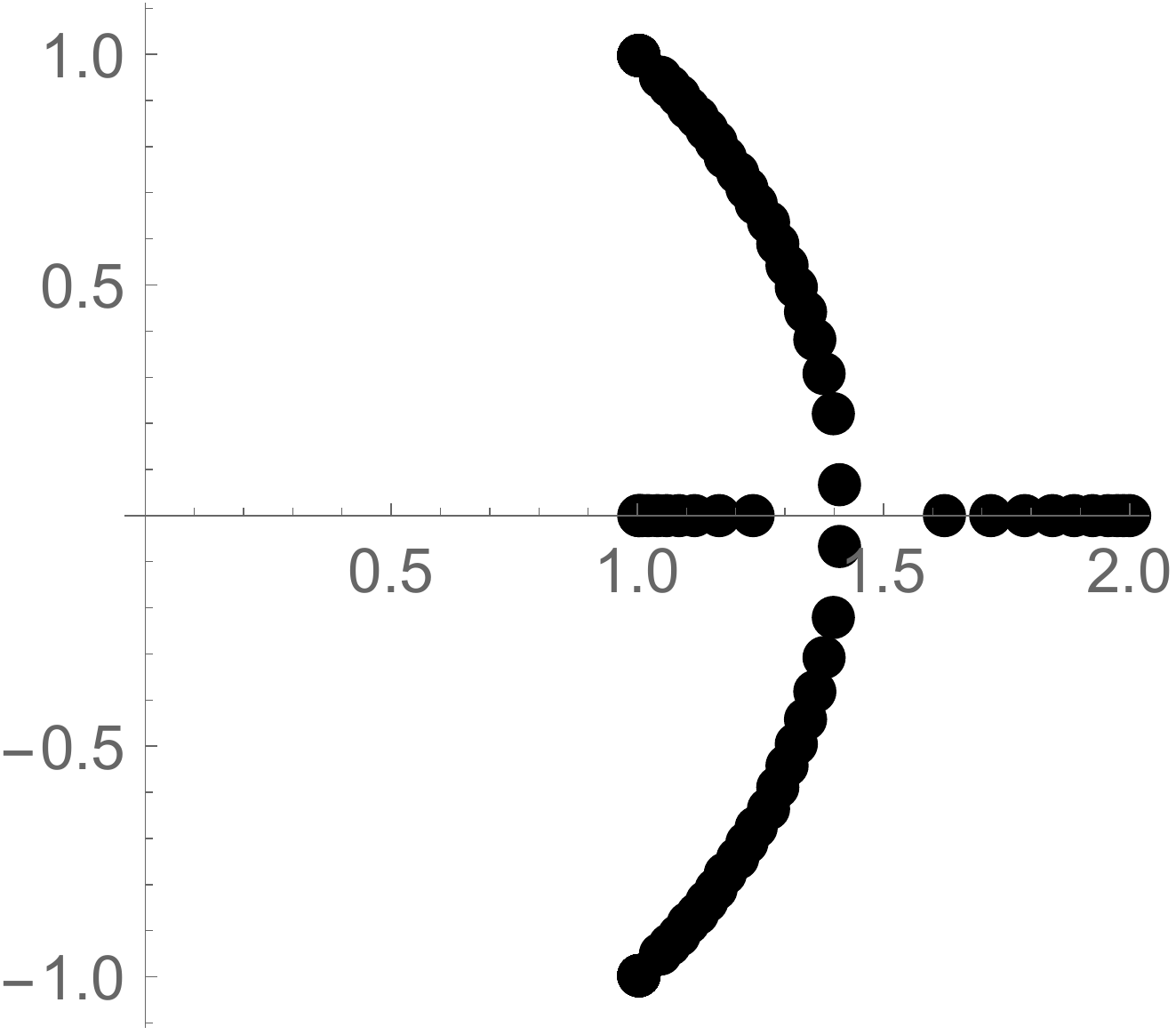} 
\par\end{centering}
\caption{\label{fig:example}Zeros of $P_{50,30}(z)$ when $A(z)=1$, $B(z)=z^{2}-2z+2$,
and $C(z)=z$}
\end{figure}
\end{example*}

\section{Proof of the theorem}

For any $z$ such that $A(z)B(z)\ne0$, we make the substitutions
$s\rightarrow s/A(z)$, $t\rightarrow t/B(z)$ and rewrite \eqref{eq:genfunc}
as 
\[
\sum_{m=0}^{\infty}\sum_{n=0}^{\infty}P_{m,n}(z)\frac{s^{m}}{A^{m}(z)}\frac{t^{n}}{B^{n}(z)}=\frac{1}{1+s+t+C(z)st/A(z)B(z)}.
\]
If we let $\left\{ H_{m,n}(z)\right\} _{m,n=0}^{\infty}$ be the table
of polynomials generated by 
\begin{equation}
\sum_{m=0}^{\infty}\sum_{n=0}^{\infty}H_{m,n}(z)s^{m}t^{n}=\frac{1}{1+s+t+zst},\label{eq:genfuncH}
\end{equation}
then 
\[
A^{m}(z)B^{n}(z)H_{m,n}\left(\frac{C(z)}{A(z)B(z)}\right)=P_{m,n}(z).
\]
Thus to prove \ref{thm:mainthm}, it suffices to prove that for any
$m,n\in\mathbb{N}_{0}$, the zeros of $H_{m,n}(z)$ lie on $[1,\infty)$
and $\bigcup_{m,n=0}^{\infty}\mathcal{Z}\left(H_{m,n}\right)$ is
dense on this interval.

From \eqref{eq:genfuncH}, for small $s,t$, we have

\begin{align*}
\sum_{m=0}^{\infty}\sum_{n=0}^{\infty}H_{m,n}(z)s^{m}t^{n} & =\frac{1}{1+s+t(1+zs)}\\
 & =\frac{1}{(1+s)\left(1+\frac{1+zs}{1+s}t\right)}\\
 & =\frac{1}{1+s}\sum_{n=0}^{\infty}t^{n}(-1)^{n}\left(\frac{1+zs}{1+s}\right)^{n}\\
 & =\sum_{n=0}^{\infty}t^{n}(-1)^{n}\frac{(1+zs)^{n}}{(1+s)^{n+1}}
\end{align*}
and consequently for each $n\in\mathbb{N}_{0}$ 
\begin{equation}
\sum_{m=0}^{\infty}H_{m,n}(z)s^{m}=(-1)^{n}\frac{(1+zs)^{n}}{(1+s)^{n+1}}.\label{eq:genfuncfixedn}
\end{equation}

\begin{lem}
\label{lem:Qmn}For each fixed $n\in\mathbb{N}_{0}$, let $\left\{ Q_{m,n}(z)\right\} _{m\in\mathbb{N}_{0}}$
be the sequence of polynomials generated by 
\begin{equation}
\sum_{m=0}^{\infty}Q_{m,n}(z)s^{m}=\frac{(1+s)^{n}}{1+zs}.\label{eq:Qmngen}
\end{equation}
Then 
\[
Q_{m,n}(z)=\begin{cases}
\sum_{k=0}^{m}(-1)^{m+k}\binom{n}{k}z^{m-k} & \text{ if }m<n\\
(-1)^{m}(z-1)^{n}z^{m-n} & \text{ if }m\ge n.
\end{cases}
\]
\end{lem}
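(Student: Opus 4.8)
The plan is to extract the coefficients on both sides of \eqref{eq:Qmngen} directly. Expanding $1/(1+zs)=\sum_{j\ge 0}(-z)^j s^j$ and $(1+s)^n=\sum_{i=0}^{n}\binom{n}{i}s^i$, the Cauchy product gives
\[
Q_{m,n}(z)=\sum_{i=0}^{\min(m,n)}\binom{n}{i}(-1)^{m-i}z^{m-i}=(-1)^m\sum_{i=0}^{\min(m,n)}\binom{n}{i}(-1)^{i}z^{m-i}.
\]
Re-indexing with $k=i$ yields precisely the claimed formula in the case $m<n$ (where $\min(m,n)=m$). So the only real content is to show that when $m\ge n$ the finite sum $\sum_{k=0}^{n}\binom{n}{k}(-1)^{k}z^{m-k}$ collapses to $(z-1)^n z^{m-n}$.

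For the case $m\ge n$, I would factor out $z^{m-n}$ from the sum:
\[
(-1)^m\sum_{k=0}^{n}\binom{n}{k}(-1)^{k}z^{m-k}=(-1)^m z^{m-n}\sum_{k=0}^{n}\binom{n}{k}(-1)^{k}z^{n-k}.
\]
The remaining sum $\sum_{k=0}^{n}\binom{n}{k}(-1)^k z^{n-k}$ is just the binomial expansion of $(z-1)^n$, so the whole expression becomes $(-1)^m z^{m-n}(z-1)^n=(-1)^m(z-1)^n z^{m-n}$, as claimed. This is entirely routine; the binomial theorem is doing all the work.

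Alternatively — and perhaps cleaner to present — one can avoid case analysis up front by observing the algebraic identity $(1+s)^n/(1+zs)=(1+s)^n\cdot\bigl(1/(1+zs)\bigr)$ and instead writing, when we anticipate $m\ge n$,
\[
\frac{(1+s)^n}{1+zs}=\frac{\bigl((1+zs)-(z-1)s\bigr)^n}{1+zs},
\]
but this is more trouble than it is worth; the direct coefficient comparison is simplest. I expect no genuine obstacle here: the lemma is a bookkeeping statement, and the only thing to be careful about is the range of summation in the Cauchy product, namely that the upper limit is $\min(m,n)$ because $\binom{n}{i}=0$ for $i>n$ and the power $z^{m-i}$ requires $i\le m$. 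Once that is stated correctly, both cases follow immediately.
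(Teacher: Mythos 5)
Your proof is correct, and it takes a genuinely different (though equally elementary) route from the paper. The paper does not expand $1/(1+zs)$ as a geometric series; instead it multiplies \eqref{eq:Qmngen} through by $(1+zs)$, compares coefficients of $s^{m}$ on both sides to obtain the two-term recurrence $zQ_{m-1,n}(z)+Q_{m,n}(z)=\binom{n}{m}$ for $1\le m\le n$ and $zQ_{m-1,n}(z)+Q_{m,n}(z)=0$ for $m>n$, and then gets the closed form by induction on $m$ (the inductive verification is left to the reader). Your Cauchy-product extraction yields the single formula $Q_{m,n}(z)=(-1)^{m}\sum_{k=0}^{\min(m,n)}(-1)^{k}\binom{n}{k}z^{m-k}$ in one pass, covers both cases at once, and reduces the $m\ge n$ case to a transparent application of the binomial theorem after factoring out $z^{m-n}$; the one point requiring care, the summation limit $\min(m,n)$, you state correctly. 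What the paper's approach buys instead is the explicit recurrence satisfied by the $Q_{m,n}$, which parallels how the table $P_{m,n}$ itself is generated; what yours buys is the avoidance of any induction. Either argument fully establishes the lemma.
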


\begin{proof}
From \eqref{eq:Qmngen}, we have 
\[
(1+zs)\sum_{m=0}^{\infty}Q_{m,n}(z)s^{m}=(1+s)^{n}.
\]
We collect the coefficient of $s^{m}$ of both sides and conclude
that 
\[
zQ_{m-1,n}(z)+Q_{m,n}(z)=\begin{cases}
\binom{n}{m} & \text{ if }1\le m\le n\\
0 & \text{ if }m>n
\end{cases}
\]
and the lemma follows from induction by $m$. 
\end{proof}
From Lemma \ref{lem:Qmn}, if $m\ge n$, then the zeros of all the
derivatives (of any orders) of $Q_{m,n}(z)$ lie on the interval $[0,1]$
by the Gauss-Lucas theorem. Thus the reciprocals of the zeros of all
these derivatives lie on $[1,\infty)$. With this observation, the
fact that the zeros of $H_{m,n}(z)$ lie on $[1,\infty)$ follows
from the lemma below. 
\begin{lem}
\label{lem:HQ}For any $m,n\in\mathbb{N}_{0}$, 
\[
H_{m,n}(z)=\frac{z^{m}}{n!}Q_{m+n,n}^{(n)}\left(\frac{1}{z}\right),
\]
where $Q_{m+n,n}^{(n)}(1/z)$ is the $n$-th derivative of $Q_{m+n,n}(z)$
evaluated at $1/z$ . 
\end{lem}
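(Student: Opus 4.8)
The plan is to obtain the generating function of the sequence $\bigl\{Q_{m+n,n}^{(n)}(z)\bigr\}_{m\in\mathbb{N}_{0}}$ directly from \eqref{eq:Qmngen} by differentiating $n$ times with respect to $z$, and then to match the result against \eqref{eq:genfuncfixedn} after the substitutions $z\mapsto 1/z$ and $s\mapsto zs$.

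First I would differentiate both sides of \eqref{eq:Qmngen} $n$ times in $z$. The factor $(1+s)^{n}$ is independent of $z$, and $\partial_{z}^{n}(1+zs)^{-1}=(-1)^{n}n!\,s^{n}(1+zs)^{-n-1}$, so this yields
\[
\sum_{m=0}^{\infty}Q_{m,n}^{(n)}(z)s^{m}=(-1)^{n}n!\,\frac{s^{n}(1+s)^{n}}{(1+zs)^{n+1}}.
\]
Differentiation term by term is legitimate here: each $Q_{m,n}$ is a polynomial, so \eqref{eq:Qmngen} can be read as an identity in $\mathbb{C}[z][[s]]$ on which $d/dz$ acts coefficientwise (alternatively, for $|s|$ small the series converges locally uniformly in $z$). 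From Lemma \ref{lem:Qmn} the polynomial $Q_{m,n}(z)$ has degree $m$, hence $Q_{m,n}^{(n)}\equiv 0$ whenever $m<n$; thus the left-hand side is a power series in $s$ beginning at order $s^{n}$, and shifting the index $m\mapsto m+n$ gives
\[
\sum_{m=0}^{\infty}Q_{m+n,n}^{(n)}(z)s^{m}=(-1)^{n}n!\,\frac{(1+s)^{n}}{(1+zs)^{n+1}}.
\]

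Next I would replace $z$ by $1/z$ and then $s$ by $zs$ in this last identity. The right-hand side becomes $(-1)^{n}n!\,(1+zs)^{n}/(1+s)^{n+1}$, and the left-hand side becomes $\sum_{m=0}^{\infty}Q_{m+n,n}^{(n)}(1/z)\,z^{m}s^{m}$. Dividing both sides by $n!$ and comparing with \eqref{eq:genfuncfixedn}, the coefficient of $s^{m}$ on each side gives exactly $H_{m,n}(z)=z^{m}Q_{m+n,n}^{(n)}(1/z)/n!$, which is the assertion of the lemma.

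I do not anticipate a genuine obstacle; the only points needing a little care are the justification of differentiating the generating identity term by term in $z$ (handled by reading it as a formal identity in $\mathbb{C}[z][[s]]$) and the bookkeeping in the two substitutions, namely checking that the powers of $z$ produced by $s\mapsto zs$ combine with $Q_{m+n,n}^{(n)}(1/z)$ to give precisely the factor $z^{m}/n!$.
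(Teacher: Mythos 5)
Your proposal is correct and follows essentially the same route as the paper: differentiate the generating identity \eqref{eq:Qmngen} $n$ times in $z$, use that $Q_{m,n}^{(n)}\equiv 0$ for $m<n$ to shift the index, perform the substitutions $z\mapsto 1/z$ and $s\mapsto zs$, and compare with \eqref{eq:genfuncfixedn}. The only difference is cosmetic (you shift the index before the substitutions, the paper divides by $s^{n}z^{n}$ after), so no further changes are needed.
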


\begin{proof}
We compute the $n$-th derivative of both sides of \eqref{eq:Qmngen}
in $z$ and obtain 
\[
\sum_{m=0}^{\infty}Q_{m,n}^{(n)}(z)s^{m}=(-1)^{n}\frac{n!(1+s)^{n}s^{n}}{(1+zs)^{n+1}}.
\]
By Lemma \ref{lem:Qmn}, for $m<n$, the degree of $Q_{m,n}(z)$ is
$m$ and thus $Q_{m,n}^{(n)}(z)=0$. With the substitutions $z\rightarrow1/z$
and $s\rightarrow sz$, this identity becomes 
\[
\sum_{m=n}^{\infty}Q_{m,n}^{(n)}\left(\frac{1}{z}\right)z^{m}s^{m}=(-1)^{n}\frac{n!(1+sz)^{n}s^{n}z^{n}}{(1+s)^{n+1}}
\]
or equivalently after dividing $s^{n}z^{n}$ on both sides and substituting
$m-n$ by $m$ 
\[
\sum_{m=0}^{\infty}Q_{m+n,n}^{(n)}\left(\frac{1}{z}\right)z^{m}s^{m}=(-1)^{n}\frac{n!(1+sz)^{n}}{(1+s)^{n+1}}.
\]
and the lemma follows from \eqref{eq:genfuncfixedn}. 
\end{proof}
To prove ${\displaystyle \bigcup_{m,n=0}^{\infty}\mathcal{Z}(H_{m,n}(z))}$
is a dense subset of $[1,\infty)$, we will show this assertion holds
for ${\displaystyle \bigcup_{m=0}^{\infty}\mathcal{Z}(H_{m,m}(z))}$.
The lemma below shows the connection between diagonal sequence $\left\{ H_{m,m}(z)\right\} _{m=0}^{\infty}$
and the sequence of Legendre polynomials which is generated by 
\[
\sum_{m=0}^{\infty}L_{m}(z)x^{m}=\frac{1}{\sqrt{1-2zx+x^{2}}}.
\]

\begin{lem}
\label{lem:Legendre}For any $m\in\mathbb{N}_{0}$, 
\[
H_{m,m}(z)=z^{m}L_{m}\left(\frac{2}{z}-1\right).
\]
\end{lem}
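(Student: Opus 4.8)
The plan is to compute the diagonal generating function $\sum_{m=0}^{\infty}H_{m,m}(z)x^{m}$ in closed form and to recognize it as the generating function of the Legendre polynomials evaluated at $2/z-1$. Writing $F(s,t)=\sum_{m,n=0}^{\infty}H_{m,n}(z)s^{m}t^{n}=(1+s+t+zst)^{-1}$ as in \eqref{eq:genfuncH}, I would extract the diagonal by the standard contour integral: for $x$ small (and $z$ a fixed complex parameter for which the manipulations are valid),
\[
\sum_{m=0}^{\infty}H_{m,m}(z)x^{m}=\frac{1}{2\pi i}\oint_{|s|=\epsilon}F\!\left(s,\frac{x}{s}\right)\frac{ds}{s}=\frac{1}{2\pi i}\oint_{|s|=\epsilon}\frac{ds}{s^{2}+(1+zx)s+x},
\]
where $\epsilon$ is chosen small enough that the contour encloses exactly one pole of the integrand.

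Next I would analyze the quadratic $s^{2}+(1+zx)s+x=(s-s_{+})(s-s_{-})$. Since $s_{+}s_{-}=x$, as $x\to0$ one root tends to $0$ and the other to $-1$, so for small $x$ only the root $s_{+}=\tfrac12\bigl(-(1+zx)+\sqrt{(1+zx)^{2}-4x}\,\bigr)$ lies inside $|s|=\epsilon$. By the residue theorem the integral equals $1/(s_{+}-s_{-})$, hence
\[
\sum_{m=0}^{\infty}H_{m,m}(z)x^{m}=\frac{1}{\sqrt{(1+zx)^{2}-4x}}.
\]
A short computation gives $(1+zx)^{2}-4x=1-2\left(\tfrac{2}{z}-1\right)(zx)+(zx)^{2}$, so the right side is exactly $\sum_{m=0}^{\infty}L_{m}\!\left(2/z-1\right)(zx)^{m}=\sum_{m=0}^{\infty}z^{m}L_{m}\!\left(2/z-1\right)x^{m}$ by the defining generating function of the Legendre polynomials. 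Comparing coefficients of $x^{m}$ yields the lemma.

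There is no deep obstacle here; the only points needing care are the justification of the diagonal-extraction identity (one can either restrict $z$ to a neighbourhood in which all series converge and all functions are analytic, or argue purely formally in $x$ with coefficients that are polynomials in $z$) and the correct identification of which root of the quadratic the contour captures. If one prefers to avoid contour integrals entirely, an alternative is to read off the closed form of $H_{m,m}(z)$ directly from \eqref{eq:genfuncfixedn}, namely $H_{m,m}(z)=(-1)^{m}[s^{m}](1+zs)^{m}(1+s)^{-m-1}=\sum_{j=0}^{m}(-1)^{j}\binom{m}{j}\binom{2m-j}{m}z^{j}$, and to match this term by term, after multiplying by $z^{m}$, against the classical expansion $L_{m}\!\left(2/z-1\right)=\sum_{k=0}^{m}\binom{m}{k}\binom{m+k}{k}\bigl(\tfrac{1-z}{z}\bigr)^{k}$; the generating-function argument is, however, the shorter route.
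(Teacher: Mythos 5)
Your proposal is correct and follows essentially the same route as the paper: extracting the diagonal $\sum_{m}H_{m,m}(z)x^{m}$ via a contour integral of the generating function with $t=x/s$ (the paper sets $x=st$ and takes the constant Laurent coefficient in $t$, which is the same integral), identifying the unique small root of the quadratic inside the contour, obtaining $\bigl((1+zx)^{2}-4x\bigr)^{-1/2}$, and matching it with the Legendre generating function after the rescaling $x\rightarrow x/z$. The paper does not include your alternative coefficient-matching remark, but the main argument is the same.
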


\begin{proof}
We first note that for each $z$, there is a small $\delta=\delta(z)>0$
such that \eqref{eq:genfuncH} holds for $|s|,|t|<\delta$. With the
substitution $x=st$ in \eqref{eq:genfuncH}, we deduce from 
\begin{align*}
\sum_{m=0}^{\infty}\sum_{n=0}^{\infty}H_{m,n}(z)x^{m}t^{n-m} & =\frac{1}{1+\frac{x}{t}+t+zx}
\end{align*}
that for each $|x|<\delta^{2}$, $\sum_{m=0}^{\infty}H_{m,m}(z)x^{m}$
is the free coefficient of the Laurent series in $t$ on the annulus
$x/\delta<|t|<\delta$. This coefficient is the residue of 
\[
\frac{1}{t(1+x/t+t+zx)}
\]
at $t=0$. To compute this residue, we evaluate 
\begin{equation}
\frac{1}{2\pi i}\ointctrclockwise_{|t|=\epsilon}\frac{1}{x+(1+zx)t+t^{2}}dt\label{eq:int-xt}
\end{equation}
where $x/\delta<\epsilon<\delta$. With the principal cut, the integrand
has two simple poles at 
\[
t_{1}=\frac{-zx-1+\sqrt{(1+zx)^{2}-4x}}{2}
\]
and 
\[
t_{2}=\frac{-zx-1-\sqrt{(1+zx)^{2}-4x}}{2}.
\]
For small $x$, we have $t_{1}=-x+\mathcal{O}(x^{2})$ and $t_{2}=-1+\mathcal{O}(x)$.
Thus the inequalities $x/\delta<\epsilon<\delta$ imply that, for
small $\delta$, only the pole $t_{1}$ lies in the circle radius
$\epsilon$. The integral \eqref{eq:int-xt} becomes 
\begin{align*}
\frac{1}{t_{1}-t_{2}} & =\frac{1}{\sqrt{(1+zx)^{2}-4x}}\\
 & =\frac{1}{\sqrt{1+(2z-4)x+z^{2}x^{2}}}.
\end{align*}
With this substitution $x\rightarrow x/z$ we conclude 
\[
\sum_{m=0}^{\infty}H_{m,m}(z)\frac{x^{m}}{z^{m}}=\frac{1}{\sqrt{1-2(2/z-1)x+x^{2}}}
\]
and the follows from comparing the right side with the generating
function of Legendre polynomials. 
\end{proof}
Since the union of all the zeros of $L_{m}(z)$, for all $m\in\mathbb{N}_{0}$,
is dense on $[-1,1]$, we conclude from the lemma above that the union
of all the zeros of $H_{m,m}(z)$ is dense on $[1,\infty)$. Moreover,
Lemma \ref{lem:Legendre} also gives the limiting distribution of
the zeros of $H_{m,m}(z)$ on this interval. Indeed, it is known that
(see \cite{assche}) the normalized zero counting measure, 
\[
\frac{1}{m}\sum_{k=1}^{m}\delta_{z_{k,m}}
\]
where $\delta_{z_{k,m}}$ is the Dirac point mass at the zero $z_{k,m}$
of $L_{m}(z)$, has the (weak{*}) asymptotic zero distribution $\omega$
with density 
\[
\frac{d\omega}{dx}=\begin{cases}
\frac{1}{\pi\sqrt{1-x^{2}}}, & \text{if }x\in[-1,1],\\
0, & \text{elsewhere}.
\end{cases}
\]
As a consequence of Lemma \ref{lem:Legendre}, the asymptotic zero
distribution $\mu$ for the normalized zero counting measure of $H_{m,m}(z)$
has the density 

\[
\frac{d\mu}{dx}=\begin{cases}
\frac{2}{\pi x^{2}\sqrt{1-(2/x-1)^{2}}}, & \text{if }x\in[1,\infty),\\
0, & \text{elsewhere.}
\end{cases}
\]

\section{A generalization}

In the previous section, the generating function \eqref{eq:genfuncH}
plays a key role in finding the zero distribution of the polynomials
$P_{m,n}(z)$ generated by \eqref{eq:genfunc}. In this section we
study the zeros of polynomials $\left\{ R_{m,n}(z)\right\} _{m,n=0}^{\infty}$
generated by
\begin{equation}
\sum_{m=0}^{\infty}\sum_{n=0}^{\infty}R_{m,n}(z)s^{m}t^{n}=\frac{N(s,t,z)}{1+s+t+zst}\label{eq:Rgenfuc}
\end{equation}
for any polynomial 
\[
N(s,t,z)=\sum_{i=0}^{I}\sum_{j=0}^{J}\sum_{k=0}^{K}a_{i,j,k}s^{i}t^{j}z^{k}\in\mathbb{R}[s,t,z].
\]
We first collect the $s^{m}t^{n}$-coefficient of both sides of 
\[
\left(\sum_{m=0}^{\infty}\sum_{n=0}^{\infty}R_{m,n}(z)s^{m}t^{n}\right)(1+s+t+zst)=\sum_{i=0}^{I}\sum_{j=0}^{J}\sum_{k=0}^{K}a_{i,j,k}s^{i}t^{j}z^{k}
\]
to conclude 
\begin{equation}
R_{m,n}(z)+R_{m-1,n}(z)+R_{m,n-1}(z)+zR_{m-1,n-1}(z)=\sum_{k=0}^{K}a_{m,n,k}z^{k}\label{eq:recurrenceRmn}
\end{equation}
with the convention that $a_{m,n,k}=0$ if $m>I$ or $n>J$. Thus
for $m>I$ or $n>J$, the polynomials $R_{m,n}(z)$ satisfy the recurrence
\[
R_{m,n}(z)+R_{m-1,n}(z)+R_{m,n-1}(z)+zR_{m-1,n-1}(z)=0,
\]
which is uniquely determined by the denominator of the generating
function, $1+s+t+zst$. From \eqref{eq:recurrenceRmn}, the initial
polynomials of the recurrence $R_{m,n}(z)$ for $m\le I$ and $n\le J$
are determined by the numerator of the generating function $N(s,t,z)$. 

We state our main theorem for this section. 
\begin{thm}
\label{thm:generaltheorem}For any $m,n\in\mathbb{N}_{0}$, the number
of nonreal zeros of $R_{m,n}(z)$, defined as in \eqref{eq:Rgenfuc},
is at most $I+J+2K$. 
\end{thm}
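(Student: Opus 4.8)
The plan is to follow the same reduction that worked in Section 2: use the partial fraction / geometric series expansion of $1/(1+s+t+zst)$ in the variable $t$ to reduce a statement about the two-variable table to a statement about a one-variable family, and then count nonreal zeros by bounding the degree of the ``new'' part that the numerator $N(s,t,z)$ introduces. Recall from the computation preceding \eqref{eq:genfuncfixedn} that
\[
\frac{1}{1+s+t+zst}=\sum_{n=0}^{\infty}(-1)^{n}t^{n}\frac{(1+zs)^{n}}{(1+s)^{n+1}}.
\]
Multiplying by $N(s,t,z)=\sum_{i,j,k}a_{i,j,k}s^{i}t^{j}z^{k}$ and collecting the coefficient of $t^{n}$, we get, for each fixed $n$,
\[
\sum_{m=0}^{\infty}R_{m,n}(z)s^{m}=\sum_{j=0}^{\min(n,J)}\left(\sum_{i,k}a_{i,j,k}z^{k}s^{i}\right)(-1)^{n-j}\frac{(1+zs)^{n-j}}{(1+s)^{n-j+1}},
\]
so that $\sum_{m}R_{m,n}(z)s^{m}$ is a rational function of $s$ whose denominator is a power of $(1+s)$ and whose numerator is a polynomial in $s$ and $z$. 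Writing everything over the common denominator $(1+s)^{n+1}$, the coefficient of $s^{m}$ in the numerator is $(1+s)^{n+1}$ times the generating series, i.e. $R_{m,n}(z)$ is extracted from
\[
\sum_{m=0}^{\infty}R_{m,n}(z)s^{m}(1+s)^{n+1}=\sum_{j=0}^{\min(n,J)}(-1)^{n-j}(1+s)^{j+1}(1+zs)^{n-j}\sum_{i,k}a_{i,j,k}z^{k}s^{i}.
\]

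The key step is then to express $R_{m,n}(z)$ in terms of derivatives of a polynomial all of whose zeros are real, exactly as Lemmas \ref{lem:Qmn} and \ref{lem:HQ} do in the special case $N\equiv1$. For the term $j$ in the sum above, the factor $(1+zs)^{n-j}$ is, after the substitutions $z\to 1/z$, $s\to sz$ used in the proof of Lemma \ref{lem:HQ}, a constant times the $(n-j)$-th $z$-derivative of $(1+s)^{n-j}$-type generating data; the remaining factors $(1+s)^{j+1}$ and $\sum_{i,k}a_{i,j,k}z^{k}s^{i}$ contribute, after clearing, a polynomial factor in $z$ of degree at most $K$ and shift the indices by at most $I$ (in $m$, via $s^{i}$) and $J$ (in $n$, via the range of $j$). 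Concretely, I expect to prove an identity of the shape
\[
R_{m,n}(z)=\frac{1}{c}\,\frac{d^{n}}{dw^{n}}\Big[w^{\,\alpha}\,G_{m,n}(w)\Big]\Big|_{w=1/z}\cdot(\text{monomial in }z),
\]
where $G_{m,n}$ is, up to an explicit polynomial factor of $z$-degree $\le K$ and index shifts $\le I$ in the polynomial degree, a polynomial with only real zeros (coming from powers of $(1+s)$ and $(1+zs)$, whose relevant specializations have all real zeros as in Lemma \ref{lem:Qmn}). Differentiating $n$ times and invoking the Gauss--Lucas theorem preserves reality of the zeros of the ``$(1+s)$-part''; the nonreal zeros of $R_{m,n}(z)$ can therefore only come from the extra factors carried by the numerator $N$.

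The final step is the bookkeeping that turns these degree bounds into the number $I+J+2K$. A polynomial factor in $z$ of degree $\le K$ contributes at most $K$ nonreal zeros; but such a factor appears both directly (the $z^{k}$ in $N$) and again after the substitution $z\to 1/z$ followed by clearing denominators — roughly, replacing $z$ by $1/z$ turns a degree-$K$ numerator factor into a degree-$K$ factor in the reciprocal variable, so it is counted twice, giving $2K$. The index shift by $i\le I$ (in $m$) contributes at most $I$ additional zeros not controlled by Gauss--Lucas, and the range $j\le J$ (in $n$), which shifts the order of differentiation / the power of $(1+s)$, contributes at most $J$. Summing, the count of zeros not forced to be real is $\le I+J+2K$, which is the assertion. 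The main obstacle I anticipate is the middle step: making the ``$R_{m,n}$ equals an $n$-th derivative of an almost-real-rooted polynomial'' identity precise when $N$ is a genuine polynomial rather than $1$, in particular tracking how the three separate shifts ($i$, $j$, $k$) interact after the two variable substitutions, and ensuring that the Gauss--Lucas argument still applies to the part of the polynomial that is not being perturbed by $N$. Once that identity is in hand, the nonreal-zero count is a routine degree count.
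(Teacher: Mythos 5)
Your set-up is sound and mirrors the paper's first steps: expanding $1/(1+s+t+zst)$ in $t$ (equivalently, writing $R_{m,n}$ as a convolution $\sum_{i,j,k}a_{m-i,n-j,k}z^{k}H_{m-i,n-j}(z)$ and invoking Lemma \ref{lem:HQ}) correctly reduces the problem to a sum of terms of the form $z^{\text{power}}Q_{m+n-i-j,\,n-j}^{(n-j)}(1/z)$. But the step you yourself flag as the ``main obstacle'' is precisely where the content of the theorem lies, and your sketch does not supply it. The terms of the sum carry derivatives of \emph{different} orders $n-j$ and are multiplied by \emph{different} monomials $z^{i+K-k}$, so there is no single identity of the shape $R_{m,n}=c^{-1}\,\frac{d^{n}}{dw^{n}}[w^{\alpha}G_{m,n}(w)]|_{w=1/z}$ available; one needs a device to bring every term under one common differential operator. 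The paper does this with the commutation identity $z^{m}D^{n}(f)=\sum_{i}\binom{m}{i}(-1)^{i}(n)_{i}D^{n-i}(z^{m-i}f)$ (Lemma \ref{lem:diffoperator}), which lets it write the entire quadruple sum as $D^{\,n-I-J-K}$ applied to a single polynomial, and then shows (Lemma \ref{lem:derivQmn}) that this single polynomial factors as $(z-1)^{n-I-J-K}z^{m-I-J-K}$ times a polynomial of degree at most $I+J+2K$.

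The more serious flaw is in your final counting. You bound the nonreal zeros by adding up ``contributions'' ($K$ from the $z^{k}$, another $K$ after $z\to1/z$, $I$ from the shift in $i$, $J$ from the range of $j$) attached to the individual summands, together with a Gauss--Lucas appeal to the unperturbed part. That scheme cannot work as stated: a sum of real-rooted (or nearly real-rooted) polynomials need not be real-rooted, so nonreal zeros of $R_{m,n}$ cannot be bounded term by term; and Gauss--Lucas controls the \emph{location} (convex hull) of zeros, not the \emph{number} of nonreal zeros of a sum. The correct principle, used at the very end of the paper's argument, is that differentiation does not increase the number of nonreal zeros of a polynomial with real coefficients (this is where $N\in\mathbb{R}[s,t,z]$ enters); applied to the explicit representation ``$z^{m+K}R_{m,n}(1/z)=D^{\,n-I-J-K}\bigl((z-1)^{n-I-J-K}z^{m-I-J-K}\times(\text{degree}\le I+J+2K)\bigr)$,'' it yields the bound in one stroke. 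So your proposal identifies the right reduction and even guesses the right bookkeeping total, but without an analogue of Lemmas \ref{lem:diffoperator} and \ref{lem:derivQmn} and with the sum-versus-term counting error, it does not constitute a proof.
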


We deduce from \eqref{eq:recurrenceRmn} and induction that the degree
of $R_{m,n}(z)$ is at most 
\[
\max\left(\min(m,n),I+J+K\right).
\]
Thus it suffices to prove Theorem \ref{thm:generaltheorem} for the
case $\min(m,n)\ge I+J+K$.

From \eqref{eq:Rgenfuc}, we conclude 
\begin{align*}
\sum_{m=0}^{\infty}\sum_{n=0}^{\infty}R_{m,n}(z)s^{m}t^{n} & =\frac{\sum_{i=0}^{I}\sum_{j=0}^{J}\sum_{k=0}^{K}a_{i,j,k}s^{i}t^{j}z^{k}}{1+s+t+zst}\\
 & =\sum_{i=0}^{I}\sum_{j=0}^{J}\sum_{k=0}^{K}a_{i,j,k}s^{i}t^{j}z^{k}\sum_{m=0}^{\infty}\sum_{n=0}^{\infty}H_{m,n}(z)s^{m}t^{n}.
\end{align*}
 By equating the coefficients of $s^{m}t^{n}$, the equation above
yields 
\begin{align*}
R_{m,n}(z)= & \sum_{i=0}^{I}\sum_{j=0}^{J}\sum_{k=0}^{K}a_{m-i,n-j,k}z^{k}H_{m-i,n-j}(z)\\
\stackrel{\text{Lemma }\ref{lem:HQ}}{=} & \sum_{i=0}^{I}\sum_{j=0}^{J}\sum_{k=0}^{K}a_{m-i,n-j,k}\frac{1}{(n-j)!}z^{m-i+k}Q_{m+n-i-j,n-j}^{(n-j)}\left(\frac{1}{z}\right).
\end{align*}
With the substitution $z$ by $1/z$, this equation implies that $z^{m+K}R_{m,n}(1/z)$
is 
\begin{equation}
\sum_{i=0}^{I}\sum_{j=0}^{J}\sum_{k=0}^{K}a_{m-i,n-j,k}\frac{1}{(n-j)!}z^{i+K-k}Q_{m+n-i-j,n-j}^{(n-j)}\left(z\right).\label{eq:recipRmn}
\end{equation}
To find an upper bound for the number of nonreal zeros of $R_{m,n}(z)$,
which is the same as that of $R_{m,n}(1/z)$, we let $D$ be the differential
operator and consider the lemma below. 
\begin{lem}
\label{lem:diffoperator}For any polynomial $f(z)$ and $m,n\in\mathbb{N}_{0}$
\[
z^{m}D^{n}(f)=\sum_{i=0}^{m}{m \choose i}(-1)^{i}D^{n-i}(z^{m-i}f)(n)_{i}
\]
where $(n)_{i}:=n(n-1)(n-2)\cdots(n-i+1)$ and $(n)_{0}=1$. 
\end{lem}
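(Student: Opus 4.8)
The plan is to establish this as an identity of linear operators acting on the polynomial ring $\mathbb{C}[z]$, proved by induction on $m$. Before starting the induction I would record the harmless fact that whenever $i>n$ the falling-factorial coefficient $(n)_i=n(n-1)\cdots(n-i+1)$ contains a zero factor; hence the sum on the right really runs over $0\le i\le\min(m,n)$ and every operator $D^{n-i}$ appearing there is an honest nonnegative-order derivative. The base case $m=0$ is trivial, both sides reducing to $D^{n}(f)$.

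The single analytic input needed for the inductive step is the iterated commutator relation $Dz-zD=1$ in the form
\[
z\,D^{k}(g)=D^{k}(zg)-k\,D^{k-1}(g),
\]
valid for all polynomials $g$ and all $k\in\mathbb{N}_{0}$; this is just Leibniz's rule applied to $D^{k}(zg)$ together with $D^{2}(z)=0$. Assuming the formula for $m$, I would multiply it by $z$, apply the displayed relation to each term $z\,D^{n-i}(z^{m-i}f)$ with $g=z^{m-i}f$ and $k=n-i$, and simplify using $(n)_i(n-i)=(n)_{i+1}$. This produces two sums; after re-indexing the second one by $i\mapsto i-1$ and applying Pascal's rule $\binom{m}{i}+\binom{m}{i-1}=\binom{m+1}{i}$ (with the conventions $\binom{m}{-1}=\binom{m}{m+1}=0$), the two sums merge into exactly the asserted formula with $m$ replaced by $m+1$.

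I do not expect a genuine obstacle here: the whole argument is bookkeeping, and the only points requiring a moment's care are (i) checking that the boundary terms $i=0$ and $i=m+1$ of the merged sum are supplied, respectively, by the undifferentiated-by-$z$ part and the top of the shifted part, and (ii) keeping track of the sign change $-(-1)^{i-1}=(-1)^{i}$ incurred by the re-indexing.

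For a self-contained alternative I would also mention a direct proof with no induction: expand every $D^{n-i}(z^{m-i}f)$ on the right by Leibniz's rule as $\sum_{l}\binom{n-i}{l}(m-i)_{l}\,z^{m-i-l}D^{n-i-l}(f)$, then collect the coefficient of $z^{m-p}D^{n-p}(f)$, i.e.\ the contribution of all pairs $(i,l)$ with $i+l=p$. Writing $\binom{m}{i}(m-i)_{p-i}=m!/\bigl(i!\,(m-p)!\bigr)$ and $\binom{n-i}{p-i}(n)_{i}=n!/\bigl((p-i)!\,(n-p)!\bigr)$, this coefficient becomes a constant multiple of $\sum_{i=0}^{p}(-1)^{i}\binom{p}{i}$, which equals $1$ when $p=0$ and $0$ when $p\ge1$; thus only the term $z^{m}D^{n}(f)$ survives, which is the claim.
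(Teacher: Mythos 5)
Your main argument is exactly the paper's proof: multiply the case $m$ by $z$, apply the commutator identity $z\,D^{k}(g)=D^{k}(zg)-k\,D^{k-1}(g)$ (which the paper invokes as the $m=1$ instance of the lemma itself), re-index the second sum, and merge via Pascal's rule, so the proposal is correct and essentially identical to the published proof. The Leibniz-expansion alternative you sketch is also valid (the coefficient of $z^{m-p}D^{n-p}(f)$ indeed collapses to a multiple of $\sum_{i=0}^{p}(-1)^{i}\binom{p}{i}$), but it is not needed.
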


\begin{proof}
We prove by induction on $m$ by assuming the statement holds up to
$m$ and show it works for $m+1$. Indeed, 
\begin{align*}
z^{m+1}D^{n}(f) & =\sum_{i=0}^{m}(-1)^{i}{m \choose i}zD^{n-i}(z^{m-i}f)(n)_{i},
\end{align*}
where by induction hypothesis (for $m=1$), we have 
\[
zD^{n-i}(z^{m-i}f)=D^{n-i}(z^{m+1-i}f)-(n-i)D^{n-i-1}(z^{m-i}f).
\]
Thus $z^{m+1}D^{n}(f)$ is 
\[
\sum_{i=0}^{m}(-1)^{i}\binom{m}{i}(n)_{i}D^{n-i}(z^{m+1-i}f)-\sum_{i=0}^{m}(-1)^{i}\binom{m}{i}(n)_{i+1}D^{n-i-1}(z^{m-i}f).
\]
We replace $i+1$ by $i$ in the second sum and use the convention
that $\binom{m}{k}=0$ if $k<0$ or $k>m$ to rewrite the expression
above as 
\[
\sum_{i=0}^{m+1}(-1)^{i}\binom{m}{i}(n)_{i}D^{n-i}(z^{m+1-i}f)+\sum_{i=0}^{m+1}(-1)^{i}\binom{m}{i-1}(n)_{i}D^{n-i}(z^{m-i+1}f).
\]
The lemma then follows from the binomial identity 
\[
\binom{m}{i}+\binom{m}{i-1}=\binom{m+1}{i}.\qedhere
\]
\end{proof}
We apply Lemma \ref{lem:diffoperator} and rewrite \eqref{eq:recipRmn}
as 
\begin{align*}
 & \sum_{i=0}^{I}\sum_{j=0}^{J}\sum_{k=0}^{K}\sum_{l=0}^{K+i-k}\frac{a_{m-i,n-j,k}}{(n-j)!}\binom{i+K-k}{l}(-1)^{l}\\
\times & D^{n-j-l}(z^{i+K-k-l}Q_{m+n-i-j,n-j}(z))(n-j)_{l}.
\end{align*}
Since $n\ge I+J$, this expression is $D^{n-J-I-K}$ of 
\begin{align}
 & \sum_{i=0}^{I}\sum_{j=0}^{J}\sum_{k=0}^{K}\sum_{l=0}^{K+i-k}\frac{a_{m-i,n-j,k}}{(n-j)!}\binom{i+K-k}{l}(-1)^{l}\nonumber \\
\times & D^{I+J+K-j-l}(z^{i+K-k-l}Q_{m+n-i-j,n-j}(z))(n-j)_{l}.\label{eq:quadsumderivQ}
\end{align}
With $m+n-i-j\ge n-j$, Lemma \ref{lem:Qmn} gives 
\[
z^{i+K-k-l}Q_{m+n-i-j,n-j}(z)=(-1)^{m+n-i-j}(z-1)^{n-j}z^{m+K-k-l}.
\]
The high order derivatives of the right side are given in the lemma
below. 
\begin{lem}
\label{lem:derivQmn}For any $a,b\in\mathbb{N}_{0}$ and $m\le\min(a,b)$
\[
D^{m}((z-1)^{a}z^{b})=(z-1)^{a-m}z^{b-m}P(z),
\]
where $P(z)$ is a polynomial of degree at most $m$. 
\end{lem}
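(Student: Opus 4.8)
The plan is to prove this by a straightforward induction on $m$, holding $a,b$ fixed and letting $m$ range from $0$ up to $\min(a,b)$. The base case $m=0$ is trivial: take $P(z)\equiv 1$, which has degree $0\le 0$.

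For the inductive step, suppose the statement holds for some $m$ with $m+1\le\min(a,b)$; since then $m<\min(a,b)$, the inductive hypothesis applies and gives $D^{m}((z-1)^{a}z^{b})=(z-1)^{a-m}z^{b-m}P(z)$ with $\deg P\le m$. Differentiating once more with the product rule and factoring out $(z-1)^{a-m-1}z^{b-m-1}$ yields
\[
D^{m+1}\bigl((z-1)^{a}z^{b}\bigr)=(z-1)^{a-m-1}z^{b-m-1}\bigl[(a-m)zP(z)+(b-m)(z-1)P(z)+(z-1)zP'(z)\bigr].
\]
Here the factorization is legitimate precisely because $m+1\le\min(a,b)$ guarantees $a-m-1\ge 0$ and $b-m-1\ge 0$. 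Denote the bracketed factor by $\widetilde P(z)$. Each of its three summands has degree at most $m+1$: the terms $zP(z)$ and $(z-1)P(z)$ have degree $\le m+1$ because $\deg P\le m$, and $(z-1)zP'(z)$ has degree $\le 2+(m-1)=m+1$. Hence $\deg\widetilde P\le m+1$, which completes the induction.

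I do not anticipate any real obstacle; the only point requiring care is the bookkeeping of the exponents, and the hypothesis $m\le\min(a,b)$ is exactly what is needed to keep the powers of $(z-1)$ and $z$ that are pulled out nonnegative at every stage. As an alternative that avoids induction entirely, one could expand $D^{m}((z-1)^{a}z^{b})$ directly by the Leibniz rule, using $D^{i}((z-1)^{a})=(a)_{i}(z-1)^{a-i}$ and $D^{m-i}(z^{b})=(b)_{m-i}z^{b-m+i}$, and then factor out $(z-1)^{a-m}z^{b-m}$ to obtain the explicit formula $P(z)=\sum_{i=0}^{m}\binom{m}{i}(a)_{i}(b)_{m-i}(z-1)^{m-i}z^{i}$, each of whose terms has degree $(m-i)+i=m$; but the inductive argument above is the shortest route.
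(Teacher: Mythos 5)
Your induction is exactly the paper's own argument: differentiate once, pull out $(z-1)^{a-m-1}z^{b-m-1}$, and observe that the bracketed factor $(a-m)zP(z)+(b-m)(z-1)P(z)+(z-1)zP'(z)$ has degree at most $m+1$ — you in fact spell out the degree count and the nonnegativity of the exponents, which the paper leaves implicit. The alternative Leibniz-rule formula $P(z)=\sum_{i=0}^{m}\binom{m}{i}(a)_{i}(b)_{m-i}(z-1)^{m-i}z^{i}$ is a correct and pleasant bonus, but not needed.
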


\begin{proof}
We prove by induction on $m$ for the case $a\le b$ and the same
argument will work for $a>b$. The claim holds trivially for $m=0$.
If this claim holds for some $m<a\le b$, then 
\begin{align*}
D^{m+1}((z-1)^{a}z^{b}) & =D((z-1)^{a-m}z^{b-m}P(z)).
\end{align*}
With the product rule, the right side becomes 
\[
(z-1)^{a-1-m}z^{b-1-m}\left((a-m)zP(z)+(b-m)(z-1)P(z)+(z-1)zP'(z)\right)
\]
and the lemma follows. 
\end{proof}
With the note that $\min(m,n)\ge I+J+K$, we apply Lemma \ref{lem:derivQmn}
to rewrite \eqref{eq:quadsumderivQ} as 
\begin{align*}
 & \sum_{i=0}^{I}\sum_{j=0}^{J}\sum_{k=0}^{K}\sum_{l=0}^{K+i-k}\frac{a_{m-i,n-j,k}}{(n-j)!}\binom{i+K-k}{l}(-1)^{m+n-i-j+l}(n-j)_{l}\\
 & \times(z-1)^{n-I-J-K+l}z^{m-k-I-J+j}P_{m,n,i,j,k,l}(z)\\
= & (z-1)^{n-I-J-K}z^{m-I-J-K}\sum_{i=0}^{I}\sum_{j=0}^{J}\sum_{k=0}^{K}\sum_{l=0}^{K+i-k}\\
 & \frac{a_{m-i,n-j,k}}{(n-j)!}\binom{i+K-k}{l}(-1)^{m+n-i-j+l}(n-j)_{l}(z-1)^{l}z^{K-k+j}P_{m,n,i,j,k,l}(z)
\end{align*}
where $P_{m,n,i,j,k,l}(z)$ is a polynomial of degree at most $I+J+K-j-l$.
Since the degree of 
\[
(z-1)^{l}z^{K-k+j}P_{m,n,i,j,k,l}(z)
\]
is at most 
\[
2K-k+I+J\le I+J+2K,
\]
so is the degree of 
\begin{align*}
 & \sum_{i=0}^{I}\sum_{j=0}^{J}\sum_{k=0}^{K}\sum_{l=0}^{K+i-k}\frac{a_{m-i,n-j,k}}{(n-j)!}\binom{i+K-k}{l}(-1)^{m+n-i-j+l}\\
 & \times(n-j)_{l}(z-1)^{l}z^{K-k+j}P_{m,n,i,j,k,l}(z)
\end{align*}
and consequently the quadruple sum above has at most $I+J+2K$ non-real
zeros. Theorem \ref{thm:generaltheorem} follows from the fact that
the differential operator does not increase the number of non-real
zeros of a polynomial.

\textbf{Acknowledgement.} The authors thank the support of Erwin Schrödinger
International Institute for Mathematics and Physics for the workshop
on Optimal Point Configurations on Manifolds. We also thank the referees
for providing corrections and suggestions to improve the paper.

\end{document}